\documentclass[11pt]{amsart}

\usepackage[a4paper,hmargin=3.5cm,vmargin=3.5cm]{geometry}
\usepackage{amsfonts,amssymb,amscd,amstext}
\usepackage{graphicx}
\usepackage[dvips]{epsfig}

\usepackage{fancyhdr}
\pagestyle{fancy}
\fancyhf{}

\usepackage{times}

\usepackage{enumerate}
\usepackage{titlesec}
\usepackage{mathrsfs}

\pretolerance=2000
\tolerance=3000


\headheight=13pt
\headsep 0.5cm
\topmargin 0.5cm
\textheight = 49\baselineskip
\textwidth 14cm
\oddsidemargin 1cm
\evensidemargin 1cm

\setlength{\parskip}{0.5em}

\titleformat{\section}
{\filcenter\bfseries\large} {\thesection{.}}{0.2cm}{}
\titleformat{\subsection}[runin]
{\bfseries} {\thesubsection{.}}{0.15cm}{}[.]
\titleformat{\subsubsection}[runin]
{\em}{\thesubsubsection{.}}{0.15cm}{}[.]

\usepackage[up,bf]{caption}

\newtheorem{theorem}{Theorem}[section]
\newtheorem{proposition}[theorem]{Proposition}

\newtheorem{lemma}[theorem]{Lemma}
\newtheorem{corollary}[theorem]{Corollary}
\newtheorem{remark}[theorem]{Remark}

\theoremstyle{definition}




\numberwithin{equation}{section}
\numberwithin{figure}{section}

\usepackage{color}

\begin{document}

\fancyhead[CO]{Stable CMC and index one minimal surfaces} 
\fancyhead[CE]{R. Souam} 
\fancyhead[RO,LE]{\thepage} 

\thispagestyle{empty}

\vspace*{1cm}
\begin{center}
{\bf\LARGE Stable  CMC and index one minimal  surfaces }

{\bf \LARGE in conformally flat  manifolds}

\vspace*{0.5cm}

{\large\bf  Rabah Souam}
\end{center}

\footnote[0]{\vspace*{-0.4cm}

\noindent Institut de Math\'{e}matiques de Jussieu-Paris Rive Gauche,   UMR 7586, B\^{a}timent Sophie Germain,  Case 7012, 75205  Paris Cedex 13, France.

\noindent e-mail: {\tt souam@math.jussieu.fr}
}

\vspace*{1cm}

\begin{quote}
{\small
\noindent {\bf Abstract}\hspace*{0.1cm} Let $M$ be a Riemannian 3-manifold  of nonnegative Ricci curvature, Ric $\geq 0.$  Suppose that $M$ is conformally flat and simply connected or more generally that it admits a conformal immersion into the standard 3-sphere. Let $\Sigma$   be a  compact connected and orientable surface immersed  in $M$  which is  a stable constant mean curvature (CMC) surface  or an index one  minimal surface. We prove that $\Sigma$ is homeomorphic either to   a sphere or to a torus. Moreover, in case $\Sigma$ is homeomorphic to a torus, then  it is embedded, minimal, conformal to a flat square torus and Ric$(N)=0$ where $N$ is a unit field normal to $\Sigma.$  The result is sharp, we can perturb the standard metric on the 3-sphere in its conformal class to obtain metrics of nonnegative Ricci curvature  admitting  minimal tori of index one and which are stable as CMC surfaces. 

As a consequence, in any 3-sphere  of positive Ricci curvature which is  conformally flat, the isoperimetric domains  are topologically 3-balls. This proves a special case of a conjecture of Ros \cite{ros0, ros1}. 

\vspace*{0.2cm}

\noindent{\bf Keywords}\hspace*{0.1cm} Constant mean curvature surfaces, minimal surfaces, stability, isoperimetric problem.
\vspace*{0.2cm}

\noindent{\bf Mathematics Subject Classification (2010)}\hspace*{0.1cm} 53C42,  49Q10
}
\end{quote}

\section{Introduction}

Minimal surfaces and constant mean curvature surfaces (CMC)  in Riemannian 3-manifolds are both critical points of  the area functional. The former  are critical for compactly supported variations and  the latter for variations which furthermore keep the 
{\it enclosed volume} constant. They are called stable if they minimize the area up to second order for those variations. In the CMC case, the terminologies volume preserving stable and weakly stable are  sometimes used  to emphasize the difference  with the minimal case. 
Stable minimal surfaces are of fundamental importance in the general theory of minimal surfaces and compact stable CMC surfaces are equally important in studying the isoperimetric problem in Riemannian 3-manifolds since the boundary of an isoperimetric region is a stable CMC.

From the variational viewpoint, the next interesting class of surfaces to consider in the minimal case are those having Morse  index one. Actually, these surfaces  have received much attention, Pitts \cite{pitts} and Pitts and Rubinstein \cite{pitts-rubinstein} have, for instance, constructed 
many examples using minimax methods. 

A positivity assumption on the curvature of the ambient 3-manifold makes the stability condition more tractable and one can indeed control the topology of stable CMC and index one minimal  surfaces. Actually, improving previous results, Ros \cite{ros2} proved the sharp result that  compact orientable stable CMC surfaces and compact orientable  index one  minimal surfaces in  orientable 3-manifolds with nonnegative Ricci curvature have genus $\leq 3.$ Note that, under this assumption, it is an easy fact that a compact orientable minimal surface is stable if and only if it is totally geodesic and the Ricci curvature evaluated on a unit normal to it vanishes. Let us mention also the works by Ritor\'e-Ros \cite{ritore-ros} and Ritor\'e \cite{ritore} about index one minimal surfaces in flat 3-manifolds.

In this paper, we consider these questions in 3-manifolds of nonnegative Ricci curvature which are  conformally flat. More precisely, we consider 3-manifolds with nonnegative Ricci curvature which admit a conformal immersion into the standard 3-sphere $\mathbb S^3.$ Simply connected and conformally flat 3-manifolds are examples of manifolds satisfying the latter condition. Indeed they can be conformally immersed into $\mathbb S^3$ by means of   a developing map. We will prove (Theorem \ref{the result}) that a connected compact  and orientable surface in this kind of manifolds which is a stable CMC  or a minimal surface of index one  is  topologically a sphere or a torus. Moreover, if such a surface is a torus,  then it has to be embedded, minimal, conformal to a flat square torus and Ric$(N)=0$ where $N$ is a unit field normal to the torus
and Ric denotes the Ricci curvature of the ambient manifold. The result is sharp, we are  able to perturb the standard metric on $\mathbb S^3$ in its conformal class to exhibit examples of metrics of nonnegative Ricci curvature admitting minimal  tori of index one which are stable as CMC surfaces (cf. Section \ref{examples}). 

A consequence of our result is a partial  solution to a conjecture of Ros.  In \cite{ros0,ros1}, Ros conjectured that in the 3-sphere endowed with a metric of positive Ricci curvature, the isoperimetric regions are topologically 3-balls.  In this direction, he proved \cite{ros0} that if $M$ is a compact $3-$manifold with  Ricci curvature Ric $ \geq 2$ and volume $V(M)\geq V(\mathbb S^3) /2,$ then any isoperimetric domain of $M$ is bounded by either a sphere or a torus. 
Our main result (Theorem \ref{the result}) has as an immediate corollary that Ros' conjecture is true for conformally flat metrics (Corollary \ref{ros-conjecture}).

\section{Preliminaries}\label{prelim}

 Let $(M,\langle,\rangle)$  be an orientable Riemannian 3-manifold and  $\Sigma$ an  orientable immersed compact CMC 
surface without boundary  in $M.$ We note that when the (constant) mean curvature of $\Sigma$ is not zero then $\Sigma$ is automatically orientable since its mean curvature field is a non vanishing global normal. Call $N$ a unit field normal to $\Sigma.$
We consider on $\Sigma$ the following quadratic form:
$$ Q(u,v)= \int_{\Sigma}\langle \nabla u, \nabla v\rangle - (|\sigma|^2+\text{Ric}(N))uv \,dA=-\int_{\Sigma}
 u Lv\,dA,\quad u,v\in\mathcal{C}^\infty(\Sigma),$$
  where $\nabla u$ stands for the gradient of $u,$   $|\sigma|^2$ is the square of the norm of second fundamental form $\sigma$ of 
the immersion and Ric$(N)$ denotes the Ricci curvature of $M$ evaluated on the  field $N.$  The linear operator 
$L=\Delta + |\sigma|^2+\text{Ric}(N)$ is the Jacobi operator of the surface, $\Delta$ being the Laplacian on $\Sigma.$

 The stability
condition in the minimal case means that the quadratic form $Q$ is nonnegative on $ \mathcal{C}^\infty(\Sigma)$ and in the 
CMC case it means that:
\begin{equation}\label{stability}
Q(u,u)\geq 0,\quad \text{for any}\quad  u\in  \mathcal  {C}^\infty(\Sigma) \quad \text{satisfying} \int_{\Sigma} u\,dA =0.
\end{equation}
 See  \cite{barbosa et al} for the details.

 Let $\Sigma$ be a minimal surface as above. Its index is by definition the number of negative eigenvalues, counted with multiplicities, of its Jacobi operator. So $\Sigma$ is stable if and only if it has index zero. Taking as a test function a constant (non-zero) function, we see immediately that when the Ricci curvature is nonnegative,  a minimal surface $\Sigma$ as above,  is stable as a minimal surface if and only if it is  totally geodesic and  Ric$(N)=0.$  

In the sequel we denote by $\mathbb S^n$ the $n$-dimensional sphere endowed with its canonical metric. We write $\mathbb S^n\subset \mathbb R^{n+1}$ to mean we view it as the  unit sphere centered at the origin in the Euclidean space
$\mathbb R^{n+1}.$ 

We will  consider 3-manifolds which have nonnegative Ricci curvature and admit a conformal immersion into $\mathbb S^3.$ This includes the simply connected conformally flat manifolds of nonnegative Ricci curvature.  Recall that a Riemannian manifold 
$(M,g)$ of dimension $n$ is said to be conformally flat if each of its points has an open neighborhood which is conformally diffeomorphic to an open subset of $\mathbb R^n.$ In other words, $(M,g)$ is conformally flat if  it admits a coordinate covering 
$\{U_{\alpha}, \phi_{\alpha}\}$ with $\phi_{\alpha}: (U_{\alpha}, g)\longrightarrow 
\mathbb S^n$  conformal. When $n\geq 3$  and $M$ is simply connected, it is  a classical fact that this implies the existence of   a conformal immersion $(M,g)\longrightarrow \mathbb S^n$, the {\it developing map}, which is unique up to M\"{o}bius transformations. This is shown using Liouville's theorem and a standard monodromy argument. We  do not assume our manifolds to be complete. Assuming completeness, Zhu \cite{zhu} proved that a complete simply connected conformally flat $n$-manifold of nonnegative Ricci curvature is either conformally equivalent  to $\mathbb R^n$ or $\mathbb S^n,$ or is isometric to 
$\mathbb S^{n-1}\times \mathbb R.$ It is clear that small perturbations of the (standard) metric  of $\mathbb S^n$  in its conformal class produce metrics with positive Ricci curvature. Also, Zhu \cite{zhu} exhibited (rotationally symmetric) complete metrics on $\mathbb R^n$ of nonnegative Ricci curvature  which are   conformally  flat and non flat.  Our results apply in particular to these manifolds in dimension 3.

To get information about stable CMC (resp. index one minimal) surfaces, one usually  chooses suitable test functions and evaluates the quadratic form $Q$ on them. A very useful source of test functions  is the following slight extension of a result originally due to Hersch  and extended by Yang and Yau, and  by Li and Yau. 

\begin{lemma}\label{hersch} \cite{{hersch,L-Y, Y-Y}}
Let $\Sigma$ be a compact Riemannian surface admitting a branched conformal map $\Psi: \Sigma\longrightarrow
\mathbb{S}^n$ and  $\rho:\Sigma \longrightarrow \mathbb R$ a  smooth positive function. Then there exists a conformal diffeomorphism $T: \mathbb{S}^n\longrightarrow
\mathbb{S}^n$ such that the vector valued map $T\circ \Psi: \Sigma \longrightarrow \mathbb S^n\subset \mathbb R^{n+1}$ has mean value zero
\begin{equation*}
\int_{\Sigma} (T\circ \Psi) \rho\, dA=0,
\end{equation*}
where $dA$ is the volume element on $\Sigma.$
\end{lemma}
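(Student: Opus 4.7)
The plan is to use Hersch's topological argument: parametrize the conformal automorphisms of $\mathbb{S}^n$ by the open unit ball $\mathbb{B}^{n+1}$, define a center-of-mass map from this ball into itself, and show by a degree argument that it must hit the origin.

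Identify $\mathbb{S}^n = \partial \mathbb{B}^{n+1} \subset \mathbb{R}^{n+1}$. For each $p \in \mathbb{B}^{n+1}$ let $T_p: \mathbb{S}^n \to \mathbb{S}^n$ be the boundary restriction of the Möbius transformation of $\bar{\mathbb{B}}^{n+1}$ which, in the Poincaré ball model, is the hyperbolic translation sending $0$ to $p$; thus $T_0 = \mathrm{id}$, and for $p = tq$ with $q \in \mathbb{S}^n$ and $t \to 1^-$ the map $T_p$ sends all of $\mathbb{S}^n \setminus \{-q\}$ into an arbitrarily small neighborhood of $q$. Setting $V = \int_\Sigma \rho\, dA > 0$, define
$$F: \mathbb{B}^{n+1} \longrightarrow \mathbb{R}^{n+1}, \qquad F(p) = \frac{1}{V} \int_\Sigma (T_p \circ \Psi)\, \rho\, dA.$$
Since $T_p \circ \Psi$ takes values in $\mathbb{S}^n$ and $\rho > 0$, convexity gives $F(p) \in \bar{\mathbb{B}}^{n+1}$; in fact $F(p) \in \mathbb{B}^{n+1}$ because $T_p \circ \Psi$ cannot be $\rho\, dA$-a.e.\ constant when $\Psi$ is a non-constant branched conformal map.

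The key step is to extend $F$ continuously to $\bar{F}: \bar{\mathbb{B}}^{n+1} \to \bar{\mathbb{B}}^{n+1}$ with $\bar{F}|_{\mathbb{S}^n} = \mathrm{id}$. Given any sequence $p_k \to q \in \mathbb{S}^n$, the preimage $\Psi^{-1}(-q)$ is a discrete, hence finite, subset of $\Sigma$ (as $\Psi$ is a branched conformal map on a compact surface), and therefore $\rho\, dA$-null; by the geometric description of $T_p$, for every $x$ outside this null set one has $T_{p_k}(\Psi(x)) \to q$, so dominated convergence gives $F(p_k) \to q$. Once the extension is in hand, a standard degree-theory argument applies: since $\bar{F}|_{\mathbb{S}^n} = \mathrm{id}$ has no zeros and degree one, the continuous map $\bar{F}$ must be surjective (otherwise radial projection from a missed point would yield a retraction of $\bar{\mathbb{B}}^{n+1}$ onto $\mathbb{S}^n$ extending the identity, contradicting Brouwer), and any preimage of the origin lies in the interior $\mathbb{B}^{n+1}$. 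Picking $p^* \in \mathbb{B}^{n+1}$ with $F(p^*) = 0$ and setting $T := T_{p^*}$ yields the conclusion.

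The main obstacle is the continuous extension at the boundary: one has to verify the precise conformal dilation behavior of $T_p$ as $p \to q \in \mathbb{S}^n$ and confirm that the branch structure of $\Psi$ does not concentrate $\rho\, dA$-measure at $-q$. Both ingredients are classical, but together they are what makes Hersch's method work in this generality; the remaining algebraic manipulations and the concluding degree argument are formal.
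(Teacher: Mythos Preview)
The paper does not actually give a proof of this lemma; it is stated with references to Hersch, Li--Yau, and Yang--Yau and then used as a black box. Your argument is the standard Hersch center-of-mass proof and is correct: parametrizing the M\"obius group by $\mathbb{B}^{n+1}$, checking that the normalized center-of-mass map extends continuously to the identity on $\mathbb{S}^n$ (using that $\Psi^{-1}(-q)$ has measure zero), and concluding by Brouwer/degree theory is exactly how the cited references establish the result.
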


Lemma \ref{hersch} and its variants have been  widely used by several authors, in  various contexts related to  stability, for meromorphic maps $\Sigma\longrightarrow \mathbb S^2$ of controlled energy, see for instance \cite{ros0} and the references therein. Our idea here is to use it for conformal maps into $\mathbb S^3$ taking advantage of the existence of a conformal  immersion of the ambient manifold  into $\mathbb S^3$ and utilizing the   known fact that follows; we include a proof for completeness.

\begin{proposition}\label{conformal}
Let  $M$ be  an orientable Riemannian 3-manifold and $\Sigma$ a
compact orientable surface  without boundary immersed in $M.$
 Denote by $K_s$ the  sectional curvature of $M$ evaluated on the tangent plane to $\Sigma,$  by $H$ the mean curvature of $\Sigma$ and by $dA$ its area element. Then the quantity 
 \begin{equation}\label{willmore}
\int_{\Sigma} (H^2 + K_s)\,  dA 
\end{equation}
is invariant under conformal changes of the metric on $M.$
\end{proposition}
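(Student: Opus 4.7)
The plan is to establish how the relevant geometric quantities transform under a conformal change $\tilde{g}=e^{2\phi}g$ of the ambient metric, and then to observe that the integrand splits into a topological piece and a pointwise conformally invariant one.

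First I would record the transformation laws. Since $\tilde{g}|_{\Sigma}=e^{2\phi}g|_{\Sigma}$, the induced area element satisfies $d\tilde{A}=e^{2\phi}\,dA$ and the unit normal rescales as $\tilde{N}=e^{-\phi}N$. Using the classical formula
$$\tilde{\nabla}_X Y = \nabla_X Y + X(\phi)\,Y + Y(\phi)\,X - \langle X,Y\rangle\,\nabla\phi$$
for the Levi-Civita connection under a conformal change, and pairing with $\tilde N$ in the metric $\tilde g$, one finds that for $X,Y$ tangent to $\Sigma$
$$\tilde{\sigma}(X,Y) = e^{\phi}\bigl(\sigma(X,Y)-\langle X,Y\rangle\,N(\phi)\bigr).$$
Diagonalising, this gives $\tilde{\kappa}_i = e^{-\phi}(\kappa_i - N(\phi))$, hence $\tilde{H}=e^{-\phi}(H-N(\phi))$, while crucially the $N(\phi)$ term cancels in the \emph{difference} of the principal curvatures:
$$\tilde{\kappa}_1-\tilde{\kappa}_2 = e^{-\phi}(\kappa_1-\kappa_2).$$

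Next I would use the Gauss equation $K_\Sigma = K_s + \kappa_1\kappa_2$ together with the elementary identity $H^2-\kappa_1\kappa_2 = \tfrac{1}{4}(\kappa_1-\kappa_2)^2$ to decompose the integrand as
$$H^2+K_s = K_\Sigma + \tfrac{1}{4}(\kappa_1-\kappa_2)^2.$$
Each summand is now handled separately. From the transformation laws above,
$$\tfrac{1}{4}(\tilde{\kappa}_1-\tilde{\kappa}_2)^2\,d\tilde{A} = \tfrac{1}{4}\,e^{-2\phi}(\kappa_1-\kappa_2)^2\cdot e^{2\phi}\,dA = \tfrac{1}{4}(\kappa_1-\kappa_2)^2\,dA,$$
so the trace-free piece of the integrand is \emph{pointwise} conformally invariant. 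On the other hand, $\int_\Sigma K_\Sigma\,dA$ depends only on the intrinsic conformal class of $\Sigma$, and by Gauss--Bonnet equals the topological invariant $2\pi\chi(\Sigma)$; in particular it is unchanged under conformal rescalings of the ambient metric. Adding the two pieces yields the invariance of \eqref{willmore}.

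The only delicate step is the derivation of the transformation rule for $\sigma$: one has to keep track simultaneously of the conformal factor in $\tilde\nabla$, the rescaling of the unit normal, and the change of the fibre metric on $TM$, and check that the tangential part of $\nabla\phi$ drops out on projection against $N$. Once this transformation is in hand, the remainder of the argument is pure algebra plus one application of Gauss--Bonnet.
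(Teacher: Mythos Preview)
Your proof is correct and follows exactly the same route as the paper: both arguments observe that the density $(\kappa_1-\kappa_2)^2\,dA$ is pointwise conformally invariant, then use the Gauss equation to write $H^2+K_s=K_\Sigma+\tfrac14(\kappa_1-\kappa_2)^2$ and invoke Gauss--Bonnet for the first summand. The paper's proof is simply a terse one-paragraph version of what you have spelled out in detail.
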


\begin{proof} Let $\kappa_1$ and $\kappa_2$ denote the principal curvatures of $\Sigma.$ It is straightforward (cf. relation (\ref {principal}) below) to check that the density $(\kappa_1-\kappa_2)^2\,dA$ is invariant under conformal changes of the metric on $M.$ Integrating over $\Sigma$ and using Gauss equation and Gauss-Bonnet formula gives the invariance of (\ref{willmore}).
\end{proof}

\section{Main result}
\medskip

We now state our main result. It improves a  weaker one we obtained previously (\cite{souam}, Theorem 3.3 (i)).

\begin{theorem}\label{the result} Let $M$ be a  Riemannian 3-manifold of nonnegative Ricci curvature, Ric $\geq 0.$ Suppose $M$ is simply connected and conformally flat or more generally that it admits a conformal immersion into $\mathbb S^3$. Let $\Sigma$ be a  compact orientable  surface without boundary immersed in $M$ with unit normal $N.$ Suppose $\Sigma$ is a stable CMC or an index one minimal surface. Then

\begin{enumerate}[\sf (i)]
\item If  $\Sigma$ is not connected then it is a finite union of totally geodesic surfaces and $\text{Ric}(N)=0.$ In this case $\Sigma$ is a stable minimal surface. 
\item If  $\Sigma$ is connected then 
it is  homeomorphic to either a sphere  or a torus . In the latter case, $\Sigma$ is minimal, embedded, has the conformal structure of a flat square torus and 
$\text{Ric}(N)=0.$ 

In particular, if $M$ has positive Ricci curvature, Ric $>0,$ then $\Sigma$ is topologically a sphere. 
\end{enumerate}
\end{theorem}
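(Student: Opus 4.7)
The plan combines Lemma \ref{hersch}, applied to a conformal map $\Sigma\to\mathbb S^3$, with the conformally invariant Willmore-type quantity of Proposition \ref{conformal}. Composing the ambient conformal immersion $\phi\colon M\to\mathbb S^3$ with the immersion $i\colon\Sigma\hookrightarrow M$ yields a branched conformal map $\Psi=\phi\circ i\colon\Sigma\to\mathbb S^3$; writing $\phi^*g_{\mathbb S^3}=\mu^2 g_M$, one has $\Psi^*g_{\mathbb S^3}=\mu^2 g_\Sigma$ on $\Sigma$. For part (i), I would use elementary test-function arguments: in the stable CMC case, mean-zero functions constant on each component of $\Sigma$ are admissible in (\ref{stability}), and the resulting expression $Q(u,u)=-\int_\Sigma(|\sigma|^2+\text{Ric}(N))u^2\,dA$ forces $|\sigma|^2+\text{Ric}(N)\equiv 0$ on each component; the index-one case is analogous, as indicators of distinct components are mutually $L^2$-orthogonal, so at most one component can be unstable, and the machinery of (ii) below handles the remaining case.

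The heart of the proof is (ii). Apply Lemma \ref{hersch} to $\Psi$ with weight $\rho\equiv 1$ (stable CMC) or $\rho=f_1$, the sign-definite first Jacobi eigenfunction (index one minimal). This yields a conformal transformation $T$ of $\mathbb S^3$ such that each coordinate $\Psi_k$ ($k=1,\dots,4$) of $T\circ\Psi\colon\Sigma\to\mathbb S^3\subset\mathbb R^4$ satisfies $\int_\Sigma \Psi_k\rho\,dA=0$, making $\Psi_k$ admissible in (\ref{stability}) in the stable CMC case, or $L^2$-orthogonal to $f_1$ and therefore satisfying $Q(\Psi_k,\Psi_k)\ge 0$ in the index one case. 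Summing over $k$ and using $|T\circ\Psi|^2\equiv 1$, so $\sum_k|\nabla\Psi_k|^2=|\nabla(T\circ\Psi)|^2$, gives
\[
\int_\Sigma(|\sigma|^2+\text{Ric}(N))\,dA\le\int_\Sigma|\nabla(T\circ\Psi)|^2\,dA.
\]
Since $T\circ\Psi$ is conformal as a composition of conformal maps on a surface, the right-hand side equals $2\,\mathrm{Area}\bigl((T\circ\Psi)(\Sigma)\bigr)$ (counted with multiplicity). Bounding $\tilde H_T^2+1\ge 1$ on the image and invoking Proposition \ref{conformal} (applied to the conformal change of ambient metric corresponding to $T$, and using $K_s\equiv 1$ on $\mathbb S^3$) yields $\mathrm{Area}((T\circ\Psi)(\Sigma))\le\int_\Sigma(H^2+K_s)\,dA$. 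Hence
\[
\int_\Sigma(|\sigma|^2+\text{Ric}(N))\,dA\le 2\int_\Sigma(H^2+K_s)\,dA.
\]

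Inserting $|\sigma|^2=2H^2+\tfrac12(\kappa_1-\kappa_2)^2$ together with the Gauss equation $K_\Sigma=K_s+H^2-\tfrac14(\kappa_1-\kappa_2)^2$, the display collapses to
\[
\int_\Sigma(\text{Ric}(N)+2H^2)\,dA\le 2\int_\Sigma K_\Sigma\,dA=4\pi\chi(\Sigma)
\]
by Gauss-Bonnet. The left-hand side is nonnegative, so $\chi(\Sigma)\ge 0$, and orientability gives the sphere/torus dichotomy; strict positivity of Ricci excludes the torus. In the torus case ($\chi=0$), every inequality in the chain is an equality, which pointwise forces $H\equiv 0$ and $\text{Ric}(N)\equiv 0$ on $\Sigma$, and $\tilde H_T\equiv 0$ on $(T\circ\Psi)(\Sigma)$, so $\Sigma$ is minimal in $M$ and realises, via $\phi$, a minimal torus in $\mathbb S^3$ up to a M\"obius motion. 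The remaining refinements\,---\,embeddedness and the flat square conformal structure\,---\,come from the full equality case of Lemma \ref{hersch}: $Q(\Psi_k,\Psi_k)=0$ makes the $\Psi_k$ specific Jacobi eigenfunctions, and combined with equality in the Willmore-type inequality one would identify the image with a M\"obius copy of the Clifford torus, whose intrinsic conformal class is that of a flat square torus. The hard part, in my view, is precisely this last rigidity step: translating equality in the analytic chain into the sharp geometric conclusion of embeddedness together with a flat square conformal structure, as opposed to merely deriving that $\Sigma$ is a minimal torus with $\text{Ric}(N)=0$.
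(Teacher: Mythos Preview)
Your chain of inequalities---balancing the conformal map $\Psi=\phi\circ i$ via Lemma~\ref{hersch}, summing $Q(\Psi_k,\Psi_k)\ge 0$ over $k=1,\dots,4$, identifying the Dirichlet energy with twice the spherical area, bounding that area by $\int_\Sigma(H^2+K_s)\,dA$ through Proposition~\ref{conformal}, and collapsing via Gauss and Gauss--Bonnet to $\int_\Sigma(2H^2+\mathrm{Ric}(N))\,dA\le 4\pi\chi(\Sigma)$---is exactly the paper's argument, and your reading of the equality case up to $H\equiv 0$, $\mathrm{Ric}(N)\equiv 0$, and ``$\Psi$ is a conformal minimal immersion into $\mathbb S^3$'' is correct. (A small point: $\Psi$ is an honest immersion, not branched, since both $\phi$ and $i$ are immersions.)

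The genuine gap is the rigidity step you yourself flag. Your proposed mechanism---that $Q(\Psi_k,\Psi_k)=0$ turns the $\Psi_k$ into Jacobi eigenfunctions and that this, together with equality in the Willmore-type bound, identifies $\Psi(\Sigma)$ with a M\"obius image of the Clifford torus---does not work as stated: equality in the Willmore bound only says $\bar H\equiv 0$, i.e.\ $\Psi$ is a minimal torus in $\mathbb S^3$, and there are infinitely many of these (Lawson's examples, for instance), so nothing yet forces embeddedness or the square conformal class. The paper takes a different and substantially heavier route at this point. It makes a \emph{second}, independent application of Lemma~\ref{hersch}, now to a degree-two meromorphic map $\phi\colon\Sigma\to\mathbb S^2$ (available on any torus), balanced so that its coordinates are admissible test functions. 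Summing the stability inequalities for the $\phi_i$ and using the already-established identity $\int_\Sigma|\sigma|^2\,dA=2\,\mathrm{area}(\Sigma,\Psi^*g)$ yields $\mathrm{area}(\Sigma,\Psi^*g)\le 8\pi$. If $\Psi$ were not embedded, Li--Yau's theorem would give $\mathrm{area}(\Sigma,\Psi^*g)\ge 8\pi$, hence equality throughout; one then deduces $|\sigma|^2=|\nabla\phi|^2$, so the Jacobi operator is $\Delta+|\nabla\phi|^2$ with index one, making $\phi$ an index-one holomorphic map on a torus---ruled out by a theorem of Ros. Therefore $\Psi$ is an embedded minimal torus in $\mathbb S^3$, and Brendle's resolution of the Lawson conjecture then identifies $\Psi(\Sigma)$ with the Clifford torus, giving the flat square conformal structure. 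None of these four ingredients (the degree-two map, Li--Yau, Ros's nonexistence result, Brendle) appear in your outline, and they are what actually closes the argument.
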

Before proving the theorem, we emphasize that we make no assumption about the completeness of $M.$ Also, since by assumption $M$ immerses into $\mathbb S^3,$ it is orientable and so the existence of a global unit normal $N$ to the surface $\Sigma$ is equivalent to its orientability.

\begin{proof}
We first treat the CMC case. The proof in the minimal case is similar and will be sketched below.

 If $\Sigma$ is not connected then one can take as a test function a function which is a non zero constant on each connected component and with mean value zero  and obtain immediately the conclusion of {\rm (i)} .
 
 To prove {\rm (ii)}, denote by $X: \Sigma\longrightarrow M$ the immersion of the CMC surface $\Sigma$ and by $F: M \longrightarrow \mathbb S^3\subset \mathbb R^4$ a conformal immersion (as we recalled in Section \ref{prelim}, such a map always exists when $M$ is conformally flat and simply connected). 
Set $\Psi=F\circ X.$ By Lemma \ref{hersch} applied to $\Psi$ with $\rho\equiv 1,$ we can assume 
 that 
\begin{equation*}
\int_{\Sigma} \Psi \, dA =0.
\end{equation*}
We therefore can use the coordinate functions of $\Psi$ as test functions. We thus have 
\begin{equation}\label{holom}
  0\leq Q(\Psi_i,\Psi_i)= \int_{\Sigma} |\nabla \Psi_i|^2 - (|\sigma|^2+\text{Ric}(N))\Psi_i^2 ,\quad\quad i=1,\dots,4.
  \end{equation}
Summing up these inequalities and taking into account that $|\Psi|=1,$  we get:
\begin{equation}\label{ineq1}
\int_{\Sigma}(|\sigma|^2 + \text{Ric}(N))\, dA  \leq \int_{\Sigma} |\nabla \Psi|^2 dA.
\end{equation}
Denote by $K$ and $K_s$ respectively the intrinsic curvature of $\Sigma$ and the sectional curvature of $M$ evaluated on the tangent plane to $\Sigma.$ By Gauss equation we have $|\sigma|^2 = 4H^2 + 2K_s-2K.$  So we rewrite  (\ref{ineq1}) as follows:
\begin{equation}\label{ineq2}
\int_{\Sigma}( 4H^2+2K_s-2K+\text{Ric}(N))\,dA \leq \int_{\Sigma} |\nabla \Psi|^2 dA.
\end{equation}
Call $g$ the standard metric on $\mathbb S^3.$ As $\Psi$ is conformal,  $\int_{\Sigma} |\nabla \Psi|^2 dA = \int_{\Sigma}  2\,\text{Jacobian}(\Psi)\, dA= 2\,\text{area}(\Sigma, \Psi^{\ast}g).$ Here $\text{area}(\Sigma, \Psi^{\ast}g)$ is the area of $\Sigma$ for the induced metric $\Psi^{\ast}g.$

Using Gauss-Bonnet formula, we transform inequality (\ref {ineq2}) into:
\begin{equation}\label{ineq3}
\int_{\Sigma}(2H^2+\text{Ric}(N)) dA + 2 \int_{\Sigma} (H^2+ K_s) \,dA - 4\pi \chi(\Sigma) \leq2\,\text{area}(\Sigma, \Psi^{\ast}g).
\end{equation}

Denote by $\bar{H}$ and $d\bar{A},$ respectively the mean curvature of the immersion $\Psi$ and the area element induced on $\Sigma.$ Using Proposition \ref{conformal}, we have:
\begin{equation}\label{ineq4}
\text{area}(\Sigma, \Psi^{\ast}g) \leq \int_{\Sigma} (\bar{H}^2 + 1)\,d\bar{A}= \int_{\Sigma} (H^2+ K_s) \,dA 
\end{equation}
Putting this into (\ref{ineq3}), we obtain:
\begin{equation}\label{ineq5}
\int_{\Sigma}(2H^2+\text{Ric}(N)) dA\leq 4\pi \chi(\Sigma)
\end{equation}

Since  $M$ has nonnegative Ricci curvature, this shows $\chi(\Sigma)\geq 0,$ that is,
$\Sigma$ is homeomorphic  either to a sphere or to a torus. 

Suppose $\Sigma$ is topologically a torus, i.e $\chi(\Sigma)=0.$ Then, as Ric $\geq 0,$  we have equality in   (\ref{ineq5}), $H=0$ and Ric$(N)=0.$ Furthermore  the equality is also reached in all  the intermediate inequalities and in particular in  (\ref{ineq4}). So $\bar H = 0,$ that is, $\Psi $ is a conformal minimal immersion into $\Bbb S^3.$ 
Again, as equality is reached in (\ref{ineq1}), we have:
\begin{equation}\label{area}
\int_{\Sigma} |\sigma|^2\, dA = \int_{\Sigma} |\nabla \Psi|^2\,dA= 2\,\text{area}(\Sigma, \Psi^{\ast}g) 
\end{equation}

We now prove $\Psi$ is an embedding, this will show  $X$ is an embedding too. As $\Sigma$ is a torus, there exists a meromorphic map $ \Sigma\longrightarrow \mathbb S^2$ of degree 2 which  can be taken, again by Lemma \ref{hersch}, such that $\int_{\Sigma} \phi \,dA = 0.$ Taking as test functions the coordinate functions $\phi_i,\, i=1,2,3,$ we have:
\begin{equation}\label{ineq6}
 \int_{\Sigma} |\sigma|^2\, \phi_i^2 \,dA\leq \int_{\Sigma} |\nabla \phi_i|^2 \,dA,\qquad i=1,2,3.
\end{equation}

Summing up these inequalities and taking into account (\ref{area}), we get:
\begin{equation} \label{ineq7}
2\,\text{area}(\Sigma, \Psi^{\ast}g) = \int_{\Sigma} |\sigma|^2 \,dA \leq \int_{\Sigma} |\nabla \phi|^2 \,dA= 16\pi.
\end{equation}

 Suppose $\Psi$ is not an embedding, then Li and Yau \cite{L-Y} have shown that in this case
  $ \text{area}(\Sigma, \Psi^{\ast}g) \geq 8\pi.$ So equality is achieved in (\ref {ineq7}) and hence also in (\ref{ineq6}). 

So the holomorphic map $\phi$ satisfies
$Q(\phi_i,\phi_i)=0,$ for $ i=1,2,3.$ As $\Sigma$ is stable, for any $v\in\mathcal{C}^\infty (\Sigma)$ satisfying $\int_{\Sigma} v=0$
and any $t\in\mathbb R$ we have $Q(\phi_i +tv,\phi_i+tv) \geq 0$ and so $Q(\phi_i,v)=0.$ It follows  that each of the  functions $\phi_i$
 satisfies the equation 
$$\Delta \phi_i + |\sigma|^2 \phi_i =c_i,$$
for some real constant $c_i,\, i=1,2,3.$ So $\phi$ verifies the equation:
\begin{equation}\label{phi1}
\Delta \phi + |\sigma|^2 \phi=\vec c
\end{equation}
where $\vec c=(c_1,c_2,c_3).$ 

On the other hand, since  $\phi: \Sigma \longrightarrow \mathbb S^2$ is holomorphic it is harmonic and therefore:
\begin{equation}\label{phi2}
\Delta \phi + |\nabla \phi|^2 \phi=0.
\end{equation}
As $\phi$ is non constant and $|\phi|=1,$ it follows easily from (\ref{phi1})
 and (\ref{phi2}) that necessarily $\vec c=\vec 0$ and $|\sigma|^2=|\nabla\phi|^2.$ So the Jacobi operator of
$\Sigma$ writes as $L= \Delta +|\nabla\phi|^2$ and the stability assumption implies that $L$ has only one negative eigenvalue.
 Otherwise said the holomorphic map $\phi$
 has index one. However such maps do not exist on tori (cf.  \cite{ros2}), a contradiction. 
 Therefore $\Psi$ is a conformal minimal embedding of a  torus in $\mathbb S^3.$ By the recent solution by Brendle \cite{brendle} to Lawson's conjecture, we know $\Psi(\Sigma)$ is congruent to the Clifford torus. This shows  $\Sigma$ is conformal to a square flat torus.
 
  Assume now  $\Sigma$ is an index one minimal surface. Let $\varphi_1\in\mathcal C^{\infty}(\Sigma)$ be a non-trivial first eigenfunction of the Jacobi operator $L.$ The index one hypothesis means that 
  \begin{equation*}\label{index1}
Q(u,u)\geq 0,\quad \text{for any}\quad  u\in  \mathcal  {C}^\infty(\Sigma) \quad \text{satisfying} \int_{\Sigma} u\varphi_1\,dA =0.
\end{equation*}
   It is well known that $\varphi_1$ has no zeros and can thus be taken positive. We then apply, as above, Lemma \ref{hersch} to the map $\Psi$ with $\rho=\varphi_1.$ The rest of the proof is similar, we omit the details.
\end{proof}

\begin{remark}
Possibility (i) in Theorem \ref{the result} happens for instance in the Riemannian product $\mathbb S^2\times\mathbb R$ 
where a finite union of slices $\mathbb S^2\times \{t\}$ is a stable compact minimal surface.
\end{remark}

We end this section with an application to the isoperimetric problem as announced in the introduction. Let $M$ be a compact Riemannian 3-manifold. From  Geometric Measure Theory results, one knows  that for any $0<V< \text{Vol}(M)$ there exists a (possibly disconnected) regular domain $\Omega\subset M$ of volume $V$ such that $\partial \Omega$ minimizes the area among all regular domains in $M$ of volume $V.$ Moreover 
$\partial \Omega$ is a (possibly disconnected) regular CMC surface and is stable. Solving the isoperimetric problem in $M$ consists in determining the isoperimetric regions. This is known in very few cases. See \cite{ros1} for an account on the subject. Ros \cite{ros0, ros1} conjectured that the isoperimetric regions in the 3-sphere endowed with a metric of positive Ricci curvature are topologically 3-balls. As a consequence of Theorem \ref{the result}, we have the following  positive  partial answer to Ros' conjecture

\begin{corollary}\label{ros-conjecture} Let $M$ be the 3-sphere endowed with a metric of positive Ricci curvature such that $M$ is conformally flat. Then 
the isoperimetric regions in $M$ are topologically 3-balls.
\end{corollary}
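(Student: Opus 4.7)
The plan is to reduce the corollary to Theorem \ref{the result} together with the classical smooth Schoenflies theorem in dimension three. First, I would invoke the geometric measure theory facts recalled just above the statement: for any $0<V<\text{Vol}(M)$ there is an isoperimetric region $\Omega\subset M$ of volume $V$ whose boundary $\partial\Omega$ is a smoothly embedded (possibly disconnected) compact stable CMC surface. Since $M=\mathbb{S}^3$ is simply connected and conformally flat, the developing map (discussed in Section \ref{prelim}) gives a conformal immersion $M\longrightarrow\mathbb{S}^3$, so the hypotheses of Theorem \ref{the result} are satisfied by $\partial\Omega$.

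Next I would eliminate every alternative in Theorem \ref{the result} except the topological sphere. In case (i), disconnectedness of $\partial\Omega$ forces the ambient identity $\text{Ric}(N)=0$ along each component, which is impossible because $\text{Ric}>0$ strictly on $M$. In case (ii), the torus alternative likewise requires $\text{Ric}(N)=0$ and is excluded for the same reason. Hence $\partial\Omega$ is connected and homeomorphic to $\mathbb{S}^2$.

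Finally, I would conclude with Alexander's theorem: any smoothly embedded $2$-sphere in $\mathbb{S}^3$ bounds a smooth $3$-ball on each of its two sides. Applied to $\partial\Omega$, this yields that both connected components of $M\setminus\partial\Omega$ are topological $3$-balls; in particular $\Omega$ is a $3$-ball, as asserted. The bulk of the work has already been done in Theorem \ref{the result}, so the only substantive points to verify are that strict positivity of $\text{Ric}$ rules out the two equality cases of that theorem, and that the smooth embeddedness of $\partial\Omega$ is exactly the regularity required to invoke the smooth Schoenflies theorem rather than a more delicate wild-embedding argument.
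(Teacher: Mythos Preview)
Your proposal is correct and follows exactly the route the paper intends: the paper presents the corollary as an immediate consequence of Theorem \ref{the result} (whose statement already records that Ric $>0$ forces $\Sigma$ to be a sphere), together with the GMT existence/regularity facts recalled just before the corollary. Your only addition is to make explicit the final step---Alexander's smooth Schoenflies theorem in $\mathbb S^3$---which the paper leaves tacit but which is indeed needed to pass from ``$\partial\Omega$ is a sphere'' to ``$\Omega$ is a 3-ball.''
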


Note that by the existence of the developing map we recalled in Sect. 2, the conformal flatness of $M$ means that $M$ is conformally equivalent to the standard sphere $\mathbb S^3$ (Kuiper's theorem). 

\section{Examples of metrics with stable tori}\label{examples}
\medskip

 We show in this section that Theorem \ref{the result} is optimal. We will perturb  the standard  metric on the 3-sphere in its conformal class to obtain metrics of nonnegative Ricci curvature having minimal tori which are stable as CMC surfaces (and have index one as minimal surfaces). We will do this keeping the Clifford torus minimal and making it stable for the new metrics. 

We start with the following  parametrization (see \cite{kapouleas-yang}) of the unit sphere centered at the origin $\mathbb S^3 \subset \mathbb R^4\simeq \mathbb C^2$ 

\begin{equation*}
\Phi: \mathbb R^2\times \left(-\frac{\pi}{4},\frac{\pi}{4}\right)\longrightarrow \mathbb S^3
\end{equation*}
\begin{align*}
\Phi(\theta,\phi,t) = &\sin \left( t+\frac{\pi}{4}\right) (\cos(\sqrt  2\, \theta), \sin(\sqrt  2\, \theta), 0,0) \\
      &+\cos \left( t+\frac{\pi}{4}\right) (0,0,\cos(\sqrt  2\,  \phi), \sin(\sqrt  2\, \phi))
\end{align*}

This parametrization covers the unit sphere with two orthogonal circles removed, namely $\mathbb S^3 \setminus\{(z_1,z_2)\in \mathbb C^2, \quad z_1=0 
\quad \text{or} \quad z_2=0\}.$ 

Note that 
$\Phi(.,.,0): \mathbb R^2\longrightarrow \mathbb S^3$ parametrizes the Clifford torus 
\begin{equation*} \mathbb T=\left\{(z_1,z_2)\in\mathbb C^2: \quad |z_1|^2=|z_2|^2=\frac{1}{\sqrt 2}\right\}.
\end{equation*}
and, for each $t\in \left(-\frac{\pi}{4},\frac{\pi}{4}\right),$ the map $\Phi(.,.,t)$ parametrizes the  surface  parallel to $\mathbb T$  at signed distance $t$.

The expression of the standard spherical metric in these (local)  coordinates is
\begin{equation*}
g= (1+  \sin(2t)) d\theta^2 + (1- \sin(2t)) d\phi^2 + dt^2.
\end{equation*}
We modify conformally the metric $g$ into $\bar g = e^{2\varphi} g,$ where $\varphi$ is a smooth function of $t$ alone to be chosen below.
The Ricci curvature tensor $\overline{\text {Ric}}$ of $\bar g$ is related to the Ricci tensor, Ric,  of $g$ as follows (cf. \cite{besse} p. 59, the sign convention for the Laplacian is opposite to ours)
\begin{equation*}\label{ricci}
\overline{\text{Ric}} = {\text {Ric}} -\left(\nabla d\varphi -d\varphi\otimes d\varphi \right) -\left(\Delta \varphi +|\nabla \varphi|^2\right) g
\end{equation*}
Straightforward computations give
\begin{equation*} \nabla d\varphi\left(\frac{\partial}{\partial t},\frac{\partial}{\partial t}\right)= \varphi^{\prime\prime},\,  \nabla d\varphi\left(\frac{\partial}{\partial \theta},\frac{\partial}{\partial \theta}\right)= \varphi^\prime \cos (2t),\,  \nabla d\varphi\left(\frac{\partial}{\partial \phi},\frac{\partial}{\partial \phi}\right)= -\varphi^\prime \cos(2t)
\end{equation*}
\begin{equation*} \nabla d\varphi\left(\frac{\partial}{\partial t},\frac{\partial}{\partial \theta}\right)= 
 \nabla d\varphi\left(\frac{\partial}{\partial \theta},\frac{\partial}{\partial \phi}\right)= 
  \nabla d\varphi\left(\frac{\partial}{\partial \phi},\frac{\partial}{\partial t}\right)= 0
\end{equation*}
and
\begin{equation*} \Delta \varphi=\varphi^{\prime\prime} - 2 \frac{\sin(2t)}{\cos(2t)} \varphi^\prime.
\end{equation*}

Taking into account that $\text{Ric}=2g,$ it follows that
\begin{equation}\label{ricci}
\begin{cases}\overline{\text{Ric}} \left(\frac{\partial}{\partial t},\frac{\partial}{\partial t}\right)=  2(1-\varphi^{\prime\prime}) + 2\, \frac{\sin (2t)}{\cos (2t)}\, \varphi^\prime\\
\overline{\text{Ric}} \left(\frac{\partial}{\partial \theta},\frac{\partial}{\partial \theta}\right)= (1+\sin(2t))\left(2 - \varphi^{\prime\prime} +2\,\frac{\sin (2t)}{\cos (2t)}\varphi^\prime -(\varphi^\prime)^2\right)-\varphi^\prime\cos(2t)\\
\overline{\text{Ric}}\left(\frac{\partial}{\partial \phi},\frac{\partial}{\partial \phi}\right)=(1-\sin(2t))\left(2 - \varphi^{\prime\prime} +2\,\frac{\sin (2t)}{\cos (2t)}\varphi^\prime -(\varphi^\prime)^2\right)+\varphi^\prime\cos(2t)\\
\overline{\text{Ric}} \left(\frac{\partial}{\partial \theta},\frac{\partial}{\partial \phi}\right)= \overline{\text{Ric}} \left(\frac{\partial}{\partial \phi},\frac{\partial}{\partial t}\right)=
\overline{\text{Ric}} \left(\frac{\partial}{\partial t},\frac{\partial}{\partial \theta}\right)=0.
\end{cases}
\end{equation}

 Let now $r\in (0,\frac{\pi}{8})$ and take a smooth {\it even} function  $\zeta: (-\frac{\pi}{4},\frac{\pi}{4})\longrightarrow\mathbb R$ satisfying the following conditions
 
 \begin{enumerate} [\sf (i)]
 \item $\zeta(0)=1, \zeta$ is  decreasing on $(0,r)$ and  $\zeta(r)=0$
 \item $ \zeta$ is negative on $(r,2r)$
 \item $\zeta\equiv 0$ on $[2r, \frac{\pi}{4})$ 
 \item $\int_{0}^{2r} \zeta(s)\,ds=0.$ 
 \end{enumerate}
  We set $\varphi(t)=\int_0^t(\int _0^s \zeta(x)\,dx)  \,ds.$ 
  
   Denote by $\kappa_1, \kappa_2$ the principal curvatures of a surface with unit normal $N$ for the 
  metric $g.$ Its  principal curvatures  for the metric $\bar g$ are given by
\begin{equation}\label{principal}  \bar\kappa_i = e^{-2\varphi} (\kappa_i + \langle \nabla \varphi, N\rangle ) , \qquad i=1,2
\end{equation}
A unit normal to the Clifford torus $\mathbb T$ is $N= \frac{\partial}{\partial t}_{|t=0}$ and so
\begin{equation*}
\bar\kappa_i=e^{-2\varphi(0)} (\kappa_i + \varphi^\prime(0) ), \qquad i=1,2
\end{equation*}

With the choice of $\varphi$ as above, we have $\varphi(0)=0,$ so that the metric on  $\mathbb T$ is unchanged and $\bar N =N$ is a unit normal to $\mathbb T$ 
for the metric $\bar g.$  As $ \varphi^\prime (0)=0$ we have $\bar\kappa_i=\kappa_i,\, i=1,2.$ 
The Clifford torus $\mathbb T$ is therefore still minimal for the metric $\bar g$ and the square of the norm of its second fundamental form $\bar{\sigma}$ is $|\bar{\sigma}|^2=|\sigma|^2=2.$
Since  
 $ \varphi^{\prime\prime}(0)=1,$ we have $\overline{\text{Ric}} (\bar N, \bar N)=0.$ So the Jacobi operator of $\mathbb T$  for the new metric is $L=\Delta + 2.$ For the Clifford torus, the first non zero eigenvalue of $\Delta$ is $\lambda_1=2.$ It follows that $\mathbb T$ is stable as a CMC surface and has index one as a minimal surface for the metric $\bar g.$ 
 
 For $|t|\geq 2r,$ using  {\rm (iii)} and  {\rm (iv)}, we have $\bar g= C\, g$ where $C=e^{2\varphi(2r)}$ is a constant and so $\bar g$ extends to the whole of $\mathbb S^3$. It remains to show that we can choose $r$ so that, with $\varphi$ as above, the Ricci curvature of $\bar g$ is nonnegative. 
 
 It follows from   {\rm (i)}, {\rm (ii)}, {\rm (iii)} and {\rm (iv)} that $\varphi^\prime(t)=\int_0^t \zeta(s)\,ds\geq 0$  on $[0,\frac{\pi}{4})$ (resp. $\leq 0$ on $(-\frac{\pi}{4},0]$) and so,  for any $t\in(-\frac{\pi}{4},\frac{\pi}{4}),\quad \sin(2t)\varphi^{\prime} (t)\geq 0.$  Observe also that, by   properties {\rm (i), (ii)} and  {\rm (iii)},  we have   $|\varphi^\prime|\leq r$  and  $\varphi^{\prime\prime}\leq 1$ on $(-\frac{\pi}{4},\frac{\pi}{4}).$ Using this one checks easily from (\ref{ricci}) that  $\overline{\text{Ric}} \geq 0$ if  $r$ is taken small enough.
   It is interesting to note that, with the choice of $r$ small and $\varphi$ as above, $\overline{\text{Ric}}$ vanishes only in the direction of $ \frac{\partial}{\partial t}_{|t=0}.$

\bibliographystyle{alpha}

\enddocument